\date{}
\title{\vspace{-1.8cm}Domination in 3-tournaments}
\author{
D\'aniel Kor\'andi \thanks{Department of Mathematics, ETH, 8092 Zurich. Email: daniel.korandi@math.ethz.ch.}
\and
Benny Sudakov \thanks{Department of Mathematics, ETH, 8092 Zurich.
Email: benjamin.sudakov@math.ethz.ch. 
Research supported in part by SNSF grant 200021-149111.}
}
\theoremstyle{plain}
\newtheorem{THM}{Theorem}
\newtheorem*{THM*}{Theorem}
\newtheorem{CONJ}[THM]{Conjecture}
\theoremstyle{definition}
\newcommand{\polylog}{\textrm{polylog}}
\begin{document}
\maketitle

\begin{abstract}
A 3-tournament is a complete 3-uniform hypergraph where each edge has a special vertex designated as its tail.
A vertex set $X$ dominates $T$ if every vertex not in $X$ is contained in an edge whose tail is in $X$. The domination number of $T$ is the minimum size of such an $X$. 
Generalizing well-known results about usual (graph) tournaments, Gy\'arf\'as conjectured that there are 3-tournaments with arbitrarily large domination number, and that this is not the case if any four vertices induce two triples with the same tail. In this short note we solve both problems, proving the first conjecture and refuting the second.
\end{abstract}

\medskip
A tournament is an oriented complete graph. The following generalization of tournaments to higher uniformity was suggested by Gy\'arf\'as. An $r$-tournament is a complete $r$-uniform hypergraph $T$ where each edge has a special vertex designated as its tail. We say that a vertex set $X$ dominates $T$ if every vertex outside $X$ is contained in a hyperedge whose tail is in $X$. The domination number of $T$ is the minimum size of such a dominating set $X$. Recently Gy\'arf\'as made the following two conjectures about 3-tournaments (see \cite{GY}).

\begin{CONJ}[Gy\'arf\'as]~
\begin{enumerate}
\item There are 3-tournaments with arbitrarily large domination number.
\item The domination number of a 3-tournament such that any four of its vertices induce at least two edges with the same tail is bounded by a constant.
\end{enumerate}
\end{CONJ}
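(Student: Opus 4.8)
The plan is to route both parts through a single reduction. For distinct vertices $u,v$ of a $3$-tournament $T$, write $u \Rightarrow v$ if some triple $\{u,v,w\}$ has its tail at $u$. Then a set $X$ dominates $T$ exactly when every $v \notin X$ satisfies $u \Rightarrow v$ for some $u \in X$, so the domination number of $T$ is the ordinary out-domination number of the digraph $D$ on $V(T)$ with arcs $\{(u,v) : u \Rightarrow v\}$. Two facts about $D$ orient everything. First, $D$ inherits a quasi-transitivity from the tail rule: in any triple $\{u,v,w\}$ the tail sends arcs to the other two, so \emph{every three vertices contain one with arcs to the other two}; consequently, if $v$ escapes a set $X$ (no $u \in X$ has $u \Rightarrow v$), then applying this to the triples $\{u,u',v\}$ with $u,u' \in X$ shows $v$ must send an arc to \emph{every} vertex of $X$. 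Second, writing $G_u = \{\{a,b\} : t(\{u,a,b\}) = u\}$ for the ownership graph of $u$ and $S_u$ for its set of non-isolated vertices (so $u \Rightarrow v \iff v \in S_u$), each triple contributes exactly one edge to exactly one $G_u$, whence $\sum_u |E(G_u)| = \binom{n}{3}$ and, since $|E(G_u)| \le |S_u|^2$, the supports satisfy $\sum_u |S_u|^2 = \Omega(n^3)$ and cannot all be small. In particular a uniformly random tail assignment is hopeless — one vertex then lies in a triple-tail through almost every other vertex, giving domination $O(1)$ — so any large-domination example must make large supports overlap in a coordinated way.

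For part (1) the target is therefore a choice of ownership graphs whose supports $S_u$ form a covering design in the following sense: every three vertices have one owning the opposite pair (this is precisely what makes the tail assignment consistent), yet for every $k$-set $X$ the union $\bigcup_{u \in X} S_u$ fails to cover $V$, so that a common escaping vertex survives. A counting heuristic suggests this can be pushed to $k \to \infty$, so I would construct such supports explicitly — for instance as translates $S_u = u + Q$ of a single structured set $Q$ in a cyclic or finite-field vertex set, reducing both the consistency and the non-covering requirements to sumset conditions on $Q$ — or by a random choice repaired by alteration on the few uncovered triples. The main obstacle here is exactly the tension between the two demands: consistency pulls the supports to be large and jointly exhaustive on all triples, while large domination needs them to leave an $X$-dependent hole, and reconciling these (while checking that no unintended arcs sneak into $D$) is the entire content of the construction.

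For part (2) the hypothesis is the purely local statement that among the four triples spanned by any four vertices two share a tail; equivalently, no four vertices are \emph{rainbow} (carry four triples with four distinct tails). My plan is to exploit this rigidity to produce a bounded dominating set. Since the tail-equals-minimum rule of a linear order has domination number $1$, I would first analyse the admissible tail-patterns on a quadruple and then attempt to glue this local data into a single order — or a bounded mixture of orders — by a Ramsey/consistency argument: colour the pairs by the dominance direction derived from $D$, extract a large transitive block, and show the four-vertex condition forbids the mutually non-dominating configurations that an unbounded set of pairwise-escaping vertices would require. The crux, and the step I expect to be the true obstacle, is that forbidding rainbow quadruples is a remarkably weak constraint: it still permits many non-transitive local patterns, so the condition need not propagate to any global order, and the whole bound rests on ruling out an unbounded family of vertices that pairwise escape one another — precisely the configuration that this local hypothesis may be powerless to prevent.
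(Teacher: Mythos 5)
Your reduction for part (1) is sound as far as it goes: the ``ownership'' digraph correctly encodes domination, the identity $\sum_u |E(G_u)| = \binom{n}{3}$ and the consequent lower bound on $\sum_u |S_u|^2$ are right, and your observation that uniformly random tails give domination $O(1)$ correctly identifies why naive randomness (which works for graph tournaments) fails here. But the proposal stops exactly where the proof has to start. You never exhibit a tail assignment whose supports are simultaneously consistent (every triple owned) and non-covering (every $k$-set $X$ leaves $\bigcup_{u\in X} S_u \cup X \neq V$), and this is not a gap one closes by alteration or by an unspecified ``structured set $Q$'': the sumset conditions your translate idea requires are essentially Haight's theorem \cite{H}, and the existence of the needed structure was itself an open problem --- Myers \cite{M} and Daskalakis--Mehta--Papadimitriou \cite{DMP} conjectured the opposite in the closely related $S_2$ setting --- until Anbalagan, Huang, Lovett, Norin, Vetta and Wu \cite{AHLNVW} disproved it. The paper does not build supports directly; it imports a digraph $D$ of girth at least $4$ with property $S_k$ from \cite{AHLNVW} and defines the tail of each triple $A$ to be the smallest starting vertex of a longest path in $D[A]$. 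Acyclicity of $D[A]$ makes this well defined and forces every tail to have indegree $0$ in $D[A]$, so the $S_k$-witness $v$ for a $k$-set $X$ escapes $X$: a tail $t \in X \cap A$ would receive the arc $v \to t$ inside $D[A]$. The consistency problem you flag as ``the entire content of the construction'' is thus dispatched by the longest-path rule, and the genuine depth is delegated to the cited theorem; as written, your part (1) is a plan, not a proof.

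Part (2) is the fatal problem: the statement you set out to prove is \emph{false}, and refuting it is precisely the point of the paper. The same tournament $T_D$, when $D$ has girth at least $5$ (so that every $4$-vertex induced subdigraph $D[B]$ is acyclic), satisfies the hypothesis that any four vertices induce two triples with the same tail: if $x$ is the designated starting vertex of a longest path in $D[B]$ and $xy$ is its first edge, then no path of length $2$ in $D[B]$ ends at $y$, which forces $x$ to be the tail of both triples of $B$ containing $\{x,y\}$ (and if $D[B]$ is empty, $x$ tails all three triples through it). Yet $T_D$ has domination number at least $k+1$. So no Ramsey/gluing argument toward a global order, or any other argument, can bound the domination number under this hypothesis. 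Your closing worry --- that forbidding rainbow quadruples is too weak to rule out an unbounded family of jointly escaping vertices --- is exactly correct, and high-girth digraphs with property $S_k$ are the mechanism that realizes that configuration; the right move on part (2) was to hunt for a counterexample, and in fact the single construction above settles both parts of the conjecture at once, proving the first and disproving the second.
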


These conjectures were motivated by analogous classic results about usual tournaments (see, e.g., \cite{MBOOK}). Indeed, it is well known that an $n$-vertex tournament can have a domination number as large as $(1+o(1))\log_2 n$, e.g., random tournaments have this property. On the other hand, if any three vertices of a tournament induce two edges with the same tail, i.e., there are no cyclic triangles, then the tournament is transitive and thus has a dominating set of size 1.

In this short note we construct 3-tournaments of arbitrarily large domination number such that any four vertices induce at least two edges with the same tail. This proves the first conjecture and disproves the second.

\medskip
The above conjectures turn out to be closely related to a problem about directed graphs. Recall that a directed graph has property $S_k$ if every set of size $k$ is dominated by some other vertex, i.e., for any set $X$ of size $k$, there is a vertex $v$ such that all $k$ edges between $v$ and $X$ exist and are directed towards $X$. The girth of a digraph is the minimum length of a directed cycle in it. Myers conjectured in 2003 \cite{M} that every digraph satisfying $S_2$ has girth bounded by an absolute constant.
A similar conjecture was later made in \cite{DMP}, motivated by algorithmic game theory. 
These conjectures were recently disproved by Anbalagan, Huang, Lovett, Norin, Vetta and Wu \cite{AHLNVW} (digraphs with property $S_2$ and girth four were constructed earlier
in \cite{BB}).  Their construction, which is based on a result of Haight \cite{H} (see also \cite{R}) in additive number theory, establishes the following.

\begin{THM}[\cite{AHLNVW}] \label{AHLNVW}
For any $k$ and $l$, there is a directed graph of girth at least $l$ that has property $S_k$.
\end{THM}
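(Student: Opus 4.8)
The plan is to realise the desired digraph as a Cayley digraph. Fix $n$, work in $G=\mathbb{Z}_n$, choose a connection set $S\subs G\setminus\{0\}$, and let $D=D(G,S)$ be the digraph on vertex set $G$ with an arc from $u$ to $w$ precisely when $u-w\in S$. The first step is to translate the two requirements into conditions on $S$. A directed closed walk $v_0\to v_1\to\dots\to v_m=v_0$ corresponds to $s_1,\dots,s_m\in S$ with $s_1+\dots+s_m=0$ (where $s_{j+1}=v_j-v_{j+1}$); since a shortest such relation has distinct partial sums and hence yields a genuine cycle, the girth of $D$ is at least $l$ as soon as $0\notin mS$ for every $1\le m\le l-1$, writing $mS=\{s_1+\dots+s_m:s_i\in S\}$ for the $m$-fold sumset. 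For property $S_k$, a vertex $v$ dominates $X=\{x_1,\dots,x_k\}$ exactly when $v-x_i\in S$ for all $i$, that is $v\in\bigcap_i(x_i+S)$; moreover $0\notin S$ forces $v\ne x_i$, so the dominating vertex is automatically outside $X$. Hence $D$ has property $S_k$ if and only if any $k$ translates of $S$ have a common element.

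Everything thus reduces to finding, for the given $k$ and $l$, a set $S\subs\mathbb{Z}_n$ with (i) every $k$ translates of $S$ intersecting, and (ii) $0\notin mS$ for all $1\le m\le l-1$. The covering condition (i) is a higher-order analogue of $S$ being a difference cover: for $k=2$ it says exactly that $S-S=\mathbb{Z}_n$. The second step is to produce a set satisfying (i) whose \emph{short sumsets are tiny}, and this is where Haight's theorem \cite{H} (and its refinement in \cite{AHLNVW}; see also \cite{R}) enters. It yields, for a suitable $n$, a set that is a difference cover — and, in the strengthened form, a set any $k$ of whose translates meet — while the bounded sumset $P:=\bigcup_{m=1}^{l-1}mS$ occupies only a vanishing fraction of the group, $\card{P}=o(n)$.

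The third step converts the smallness of $P$ into condition (ii) by passing to a translate. Condition (i) is translation invariant, so $S+t$ serves as well as $S$ for any $t$; and $0\in m(S+t)$ means $-mt\in mS$, i.e. $mt\in -mS$. Taking $n$ coprime to all $m\le l-1$ (for instance $n$ prime) makes each map $t\mapsto mt$ a bijection, so the set of $t$ for which (ii) fails has size at most $\sum_{m=1}^{l-1}\card{mS}\le (l-1)\card{P}=o(n)$. For $n$ large this is smaller than $n$, so a good translate exists; the resulting $S$ satisfies both (i) and (ii), and $D(\mathbb{Z}_n,S)$ has girth at least $l$ and property $S_k$.

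The main obstacle is securing condition (i) for general $k$ while keeping the short sumsets negligible, since the two demands pull in opposite directions. Forcing $k$ translates to overlap wants $S$ dense — the crude density threshold $\card{\mathbb{Z}_n\setminus S}<n/k$ already gives (i) — whereas a dense $S$ admits many short zero-sums and hence small girth. A naive product construction over several coordinates preserves a vanishing relative sumset but does \emph{not} amplify the covering multiplicity (a product of $2$-fold covers is still only a $2$-fold cover), so the tension cannot be resolved for free. It is precisely the role of the (generalised) Haight construction to build a single set that is simultaneously a $k$-fold difference cover and additively so structured that $0\notin mS$ for every $m<l$.
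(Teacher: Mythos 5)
The paper offers no proof of this theorem --- it is imported wholesale from \cite{AHLNVW} --- so the comparison is with the original construction, and your framework is the same one used there: a Cayley digraph on $\mathbb{Z}_n$, with girth governed by short zero-sums of the connection set and property $S_k$ by intersecting translates. Your reductions are all sound. Girth at least $l$ is indeed equivalent to $0\notin mS$ for $1\le m\le l-1$; property $S_k$ is indeed equivalent to every $k$ translates of $S$ having a common point; and the translation trick at the end works --- in fact you do not even need $n$ coprime to the small values of $m$, since each point has at most $\gcd(m,n)\le l-1$ preimages under $t\mapsto mt$, so the number of bad translates is still at most $(l-1)\sum_{m<l}\card{mS}=o(n)$.

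The gap is in your second step, and it is the heart of the matter. Haight's theorem \cite{H} produces a difference cover with small iterated sumsets, i.e.\ a set $S$ with $S-S=\mathbb{Z}_n$ and $\card{mS}=o(n)$ for $m<l$; since $S-S=\mathbb{Z}_n$ says exactly that any \emph{two} translates of $S$ meet, this proves the theorem for $k=2$ only. For $k\ge 3$, the condition that any $k$ translates of $S$ intersect is strictly stronger than being a difference cover, and producing such a set while keeping $\bigcup_{m<l}mS$ of size $o(n)$ is precisely the main technical contribution of \cite{AHLNVW}; when you appeal to ``the strengthened form in \cite{AHLNVW}'' you are citing the very result you set out to prove. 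Your closing paragraph diagnoses the obstruction accurately (density forces short cycles, and product constructions do not amplify the covering multiplicity), but it offers no construction that overcomes it. So what you have is a correct and clean reduction of the digraph statement to a generalized Haight theorem, together with a complete proof in the case $k=2$; for general $k$ the core number-theoretic construction is missing.
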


We will use this construction to resolve the above two problems about domination in 3-tournaments. Let $D$ be a digraph of girth at least 4 on a vertex set $V$, and fix an arbitrary ordering of $V$. We define $T_D$ to be a 3-tournament on the same set $V$ where the tail of each triple $A$ in $T_D$ is selected as follows.  
Look at all the directed paths in $D[A]$ of maximum length, and choose the tail of $A$ to be the smallest (according to the ordering we fixed) of the starting vertices.
Note that $D[A]$ is acyclic, so this tail has indegree 0 in $D[A]$. 
The following result together with Theorem \ref{AHLNVW} proves the existence of 3-tournaments with large domination number, and answers both questions of  Gy\'arf\'as.

\begin{THM}
If $D$ is a digraph of girth at least 4 with property $S_k$, then the tournament $T_D$ has domination number at least $k+1$. Furthermore, if $D$ has girth at least 5, then any four vertices in $T_D$ induce two triples sharing the same tail.
\end{THM}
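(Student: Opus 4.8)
The plan is to prove the two assertions separately: the domination bound follows directly from property $S_k$, while the ``two triples with a common tail'' statement reduces to a structural fact about acyclic digraphs on four vertices.

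For the first assertion I would show that no set of size $k$ dominates $T_D$. Fix an arbitrary $X$ with $\card{X}=k$ and apply property $S_k$ to obtain a vertex $v$ with $v\to x$ in $D$ for every $x\in X$; in particular $v\notin X$. I claim $X$ does not dominate $v$. Suppose to the contrary that some triple $A\ni v$ had its tail in $X$, say the tail is $x\in X\cap A$. Since $v\in A$, the edge $v\to x$ lies in $D[A]$, so $x$ has indegree at least $1$ in $D[A]$; this contradicts the fact that the tail has indegree $0$ in the acyclic digraph $D[A]$ (here we only use girth at least $4$). Hence no triple containing $v$ has its tail in $X$, so $X$ fails to dominate $v$. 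As $X$ was arbitrary, every dominating set has size at least $k+1$.

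For the second assertion, fix four vertices $W$. Since $D$ has girth at least $4$ there is no digon or triangle inside $W$, and girth at least $5$ additionally forbids a directed $4$-cycle; as any cycle on four vertices has length at most four, $D[W]$ is acyclic. (This is exactly where girth $5$ is needed: a directed $4$-cycle $a\to b\to c\to d\to a$ has no chords, and its four triples receive the pairwise distinct tails $a,b,c,d$.) For an acyclic $D[A]$ and $v\in A$, let $h_A(v)$ be the length of a longest directed path in $D[A]$ starting at $v$. A maximizer of $h_A$ is precisely a start of a longest path, and such a vertex is a source; thus $\mathrm{tail}(A)$ is the smallest (in the fixed order) maximizer of $h_A$. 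I would then let $t$ be the smallest maximizer of $h_W$ over all of $W$, put $\ell=h_W(t)$, and, if $\ell\ge 1$, choose an out-neighbour $u$ of $t$ lying on a longest path from $t$, so that $h_W(u)=\ell-1$.

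The heart of the argument is to show that both triples containing the pair $\{t,u\}$ have tail $t$, which immediately produces two triples sharing a tail. Writing such a triple as $\{t,u,z\}$, one checks that $h_{\{t,u,z\}}(u)\le 1$ and that $t$ is always a maximizer there, so $t$ can fail to be the chosen tail only through a tie, namely a competitor $z$ with $z\to u$ and the same height $1$. But $z\to u$ gives $h_W(z)\ge 1+h_W(u)=\ell$, so $z$ is itself a global maximizer of $h_W$, and the minimality of $t$ forces $t\le z$, breaking the tie in favour of $t$. This tie-breaking step is the main obstacle, because the tail is defined through the fixed vertex order rather than the internal structure of the triple; the key observation that every competitor is automatically a global maximizer is what renders the fixed order irrelevant. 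The degenerate case $\ell=0$ (no edges inside $W$) is immediate, since the globally smallest vertex is the tail of all three triples containing it.
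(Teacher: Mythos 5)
Your proposal is correct and follows essentially the same route as the paper: the first part is verbatim the paper's argument (property $S_k$ yields a vertex $v$ beating all of $X$, so no triple containing $v$ can have its tail in $X$, since tails have indegree $0$), and in the second part your $t$ and $u$ are exactly the paper's $x$ and $y$, with your height function $h$ merely repackaging the paper's observation that no path of length $2$ in $D[B]$ can end at $y$. The decisive step is identical in both proofs: any competing tail $z$ must satisfy $z\to u$, which forces $z$ to be a starting vertex of a maximum-length path in the whole $4$-set, so the minimality of $t$ in the fixed ordering breaks the tie.
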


\begin{proof}
Let $D$ be a digraph of girth at least 4 with property $S_k$. Suppose there is a set $X$ of size $k$ that dominates $T_D$. Then by property $S_k$, there is a vertex $v\in D$ such that all edges between $v$ and $X$ exist and are directed towards $X$. Since $X$ dominates $T_D$, and $v$ in particular, there is a triple $A$ containing $v$ whose tail is in $X$. But this tail has non-zero indegree in $D[A]$, contradicting the definition of $T_D$. So the domination number of $T_D$ is at least $k+1$.

Now suppose further that $D$ has girth at least 5, and pick an arbitrary set $B$ of four vertices. Then $D[B]$ is acyclic. Let $x\in B$ be the smallest among the starting vertices of the paths of maximum length in $D[B]$. If $D[B]$ is empty then $x$ is the tail of all three triples in $T_D[B]$ touching it. Otherwise, let $xy$ be the first edge of a path of maximum length in $D[B]$. Notice that there is no path of length 2 in $D[B]$ ending at $y$, as that would give a path longer than the one starting at $x$. 
But then $x$ is the tail of both triples in $B$ containing $x$ and $y$. Indeed, $z$ could only be the tail of $\{x,y,z\}$ if $zy$ was an edge in $D$ and $z$ was smaller than $x$ in the ordering, but that would contradict the choice of $x$.
\end{proof}

\medskip \noindent
\textbf{Remarks.}
\begin{itemize}
\item The questions of Gy\'arf\'as can also be asked for higher uniformity. The analogous construction (with a bit more complicated argument) shows that the domination number of $r$-tournaments can be arbitrarily large, even when any $r+1$ vertices induce $\lceil r/2\rceil$ hyperedges with the same tail. On the other hand, if any $r+1$ vertices induce $r$ edges sharing the same tail then it is not hard to see that there is a dominating vertex. It might be interesting to determine the minimum $i$ such that $i$ induced edges with the same tail contained in every subset of size $r+1$ imply a bounded domination number. We see that $\lceil r/2\rceil<i \le r$.
\item It is also natural to ask how large the domination number of a 3-tournament $T$ can be in terms of the number of vertices $n$. It is easy to show that domination number is always at most $\log_2 n$. Indeed, $T$ contains a vertex $v$ that is the tail of at least $\binom{n}{3}/n\ge n^2/7$ triples. Such a $v$ clearly dominates at least $n/2$ vertices. Applying induction on the remaining vertices and adding $v$ to the dominating set gives the above upper bound. On the other hand, the $k$ in our construction inherits a very weak dependence on $n$ from \cite{AHLNVW} and its number theoretic background. We only get a lower bound of $\polylog(\log^* n)$, where $\log^* n$ is the number of times one needs to iterate the logarithm function to reduce $n$ to a number $\le 1$, leaving a huge gap between the bounds.
\item The above-mentioned counterexample to the conjecture of Myers leads to another interesting question. How large can the girth be in an $n$-vertex digraph satisfying $S_2$? The lower bound from \cite{AHLNVW} has an order of magnitude $\polylog(\log^* n)$, while a logarithmic upper bound is easy to show. The following result of the second author together with Eyal Lubetzky and Asaf Shapira \cite{LS} gives an upper bound of $O(\log\log n)$. 
\begin{THM}
Let $D$ be an $n$-vertex digraph satisfying $S_2$, then its girth is at most $2\log_2\log_2 n$.
\end{THM}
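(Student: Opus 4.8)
The plan is to prove the equivalent quantitative statement: if the girth of $D$ equals $g$, then $n\ge 2^{2^{\lfloor g/2\rfloor}}$, which rearranges to $g\le 2\log_2\log_2 n$. I first reformulate $S_2$ as: every pair $\{a,b\}$ has a common in-neighbour, i.e.\ a vertex $w$ with $w\to a$ and $w\to b$; equivalently the out-neighbourhoods $\{N^+(w):w\in V\}$ cover all pairs. The only ``easy'' counting fact I need is the dual: a family of $n$ sets can shatter a ground set of size at most $\log_2 n$, since shattering a set of size $s$ requires all $2^s$ traces to occur, hence at least $2^s$ sets. So it suffices to exhibit a set $S$ with $|S|\ge 2^{\lfloor g/2\rfloor}$ that is \emph{shattered} by the out-neighbourhood family, meaning every $A\subseteq S$ equals $N^+(w)\cap S$ for some $w$; this forces $n\ge 2^{|S|}\ge 2^{2^{\lfloor g/2\rfloor}}$.

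The backbone of the construction is a shortest directed cycle $C=c_0c_1\cdots c_{g-1}c_0$, which supplies $g$ reference vertices at exactly known pairwise distances. The one place the girth hypothesis enters is a decoupling lemma: if $u$ reaches $c_i$ by a path of length $p$ and $c_j$ reaches $u$ by a path of length $q$, then $p+q+\big((j-i)\bmod g\big)\ge g$, since otherwise concatenating these two paths with the arc $c_i\rightsquigarrow c_j$ of $C$ gives a closed directed walk, hence a directed cycle, shorter than $g$. In particular the forward and backward balls of radius less than $g/2$ around any vertex meet only in that vertex, and two arcs of $C$ that are far apart cannot be linked by short paths in either direction. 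I will use this to argue that the traces of out-neighbourhoods on far-apart arcs behave independently.

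The heart of the argument is a doubling recursion that builds the shattered set. For a budget parameter $h$ let $s(h)$ be the largest size of a set shattered by the out-neighbourhood family that can be realised using only vertices whose forward/backward interaction with $C$ stays within distance $h$; the base case $s(O(1))\ge 1$ is immediate from $S_2$. The inductive step takes two shattered sets of size $s(h-2)$ sitting on two arcs of $C$ at cyclic distance greater than $h$ and merges them into a shattered set of size $2\,s(h-2)$: because the arcs are far apart, the decoupling lemma guarantees that a vertex's trace on one arc places no constraint on its trace on the other, and an application of $S_2$ to suitable witnesses for the two halves should produce, for every pair of patterns, a single vertex realising their union on the combined ground set. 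This yields $s(h)\ge 2\,s(h-2)$, so after $\lfloor g/2\rfloor$ steps a shattered set of size $2^{\lfloor g/2\rfloor}$, as required.

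The main obstacle is exactly this merging step. Property $S_2$ only produces a common \emph{dominator} of two chosen vertices, whereas I need a single vertex whose out-neighbourhood simultaneously realises a prescribed trace on \emph{both} arcs. Making this work will require choosing the witnesses for each half so that their common in-neighbour inherits both traces, and checking via the decoupling lemma that no short cycle is created and that the two traces cannot interfere. I expect the careful bookkeeping of which vertices remain available at each budget level, together with the proof that the number of realisable traces genuinely \emph{doubles} (rather than merely grows) at each round, to be the delicate part; once that is in place, everything reduces to the clean bound $n\ge 2^{|S|}$.
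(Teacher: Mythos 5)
Your reduction to shattering is sound as far as it goes (distinct traces need distinct witnesses, so a shattered set $S$ forces $n\ge 2^{|S|}$), and your decoupling lemma is a correct consequence of girth. But the merging step, which you yourself flag as the main obstacle, is not a matter of delicate bookkeeping: it is a genuine gap, and in the form you propose it cannot work. Suppose $w_1$ realises the trace $A_1$ on $S_1$ and $w_2$ realises $A_2$ on $S_2$, and you apply $S_2$ to the pair $\{w_1,w_2\}$ to get $w$ with $w\to w_1$ and $w\to w_2$. Then the elements of $A_1\cup A_2$ lie at distance \emph{two} from $w$, not one, so there is no reason for them to belong to $N^+(w)$; the positive half of the trace requirement is simply not inherited. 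The negative half fails too: nothing prevents $N^+(w)$ from containing unwanted elements of $(S_1\setminus A_1)\cup(S_2\setminus A_2)$, because out-edges from $w$ into the cycle do not by themselves close any directed cycle, so the girth hypothesis places no restriction on them. Your decoupling lemma only forbids the coexistence of in-edges and out-edges between a vertex and nearby positions of $C$; it says nothing about which out-edges a common in-neighbour produced by $S_2$ may or may not have. Since property $S_2$ can only ever hand you a vertex dominating \emph{two} prescribed vertices, no iteration of it controls an out-neighbourhood trace of size three or more, so the doubling recursion has no base beyond $s=2$.

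The natural repair is to measure traces by reachability within distance $2^{j}$ at level $j$, so that the witness structure composes ($w$ reaches $A_1\cup A_2$ through $w_1$ and $w_2$). This is exactly the route the paper takes, in the cleaner form: if $D$ has property $S_a$ then its $b$'th power has property $S_{a^b}$, so the $k$'th power $D'$ with $k=\log_2\log_2 n$ satisfies $S_{t}$, $t=\log_2 n$. But once you pass to powers, exact traces (shattering) must be abandoned, because a growing out-ball can swallow unwanted vertices of $S$ along paths that close no short cycle; there is no girth-based mechanism to keep them out. The paper sidesteps this by using only \emph{positive} information: either $D'$ has a $2$-cycle, which pulls back to a cycle of length at most $2k$ in $D$, or $D'$ can be completed to a tournament satisfying $S_t$, and the classical counting bound for tournaments (an $S_t$ tournament has more than $2^t$ vertices) gives the contradiction. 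That counting argument plays the role your bound $n\ge 2^{|S|}$ was meant to play, but it requires only domination, never exactness of traces — which is precisely the part your approach cannot supply.
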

\begin{proof}
Let $k=\log_2\log_2 n$ and let $D'$ be the $k$'th power of $D$, i.e., $xy$ is an edge in $D'$ if there is a directed path in $D$ from $x$ to $y$ of length at most $k$. It is easy to see that if a graph has property $S_a$, then its $b$'th power satisfies $S_{a^b}$. In particular, $D'$ has property $S_t$ with $t=2^k=\log_2 n$. If $D'$ has a cycle of length 2 then $D$ contains a cycle of length at most
$2\log_2\log_2 n$ and we are done. Otherwise, we can add directed edges to $D'$ to obtain an $n$-vertex tournament that, by monotonicity, will still satisfy $S_t$. However, it is well known (see \cite{MBOOK}) and easy to prove that such a tournament has more than $2^t=n$ vertices. This contradiction completes the proof.
\end{proof}
\end{itemize}


\begin{thebibliography}{99}

\bibitem{AHLNVW} Y. Anbalagan, H. Huang, S. Lovett, S. Norin, A. Vetta and H. Wu, 
\newblock{Large supports are required for well-supported Nash equilibria},
\newblock{\em Proceedings of the 18th International Workshop on Approximation Algorithms for Combinatorial Optimization Problems (APPROX)}, 2015.

\bibitem{BB}
P. Balister and B. Bollob\'as, Pair dominating graphs, 
{\em European Journal of Combinatorics} \textbf{27} (2006), 322--328.

\bibitem{DMP} 
C. Daskalakis, A. Mehta and C. Papadimitriou, A Note on Approximate Nash Equilibria,
{\em Theoretical Computer Science} \textbf{410} (2009), 1581--1588.

\bibitem{H}
J. Haight,
Difference covers which have small $k$-sums for any $k$,
{\em Mathematika} \textbf{20} (1973), 109--118. 

\bibitem{LS} E. Lubetzky, A. Shapira and B. Sudakov,
\newblock{personal communication.}

\bibitem{GY} K. Markstr\"om,
\newblock{Problem collection from the IML programme: Graphs, Hypergraphs, and Computing (ed.)},
\newblock{\em IML Preprint Series}, 2014, also arXiv:1511.00270.

\bibitem{MBOOK} J. H. Moon,
\newblock{\bf Topics on tournaments},
\newblock{Holt, Rinehart and Winston}, 1968.

\bibitem{M} J. Myers,
\newblock{Extremal theory of graph minors and directed graphs},
\newblock{\em Doctoral Thesis},
\newblock{University of Cambridge}, 2003.

\bibitem{R}
I. Ruzsa, 
More differences than multiple sums, preprint, arXiv:1601.04146.

\end{thebibliography}
\end{document}